\documentclass[12pt]{amsart}

\usepackage{amsmath,amssymb,amsthm,hyperref,cite,color, booktabs, geometry, tikz-cd, mathtools, cleveref}

\newtheorem{Theorem}{Theorem}[section]

\newtheorem{Lemma}[Theorem]{Lemma}

\newtheorem{Corollary}[Theorem]{Corollary}

\newcommand\Res{\mathrm{Res}}
\renewcommand\char{\mathrm{char}}
\newcommand\Br{\mathrm{Br}}
\newcommand\N{\mathrm{N}}
\newcommand\C{\mathrm{C}}
\newcommand\Ind{\mathrm{Ind}}
\newcommand\core{\mathrm{core}}
\renewcommand\ker{\mathrm{ker}}
\newcommand\Hom{\mathrm{Hom}}

\newcommand\soc{\mathrm{soc}}
\renewcommand\O{\mathrm{O}}

\title{Kernel of Scott modules and Brauer indecomposability}

\date{}

\author{Lin Wu}
\address{Department of Mathematics, Southern University of Science and Technology, Shenzhen 518055, China}
\email{12431015@mail.sustech.edu.cn}

\begin{document}

\maketitle

\begin{abstract}
Let $k$ be an algebraically closed field of prime characteristic $p$. Let $G$ be a finite group. We investigate the Brauer indecomposability of Scott $kG$-modules in relation to the kernel of modules. We generalize a criterion for Brauer indecomposability. We also prove that, in certain cases, Brauer indecomposability of a Scott $kG$-module can be lifted from that of a Scott module over a $p$-local subgroup.
\end{abstract}

\section{Introduction}

Let $G$ be a finite  group and let $p$ be a prime number. Let $k$ be an algebraically closed field with $\char(k)= p$. Let $P\leq G$ be a $p$-subgroup. For any finite dimensional $kG$-module $M$, the Brauer construction $\Br_P(M)= M(P)$ is a $k\N_G(P)$-module (for backgroud of Brauer construction, we refer the reader to \cite{Linckelmann_2018}). If for any $p$-subgroup $Q\leq G$, $\Res_{Q\C_G(Q)}^{\N_G(Q)}M(Q)$ is indecomposable or equals to $0$, we say $M$ is Brauer indecomposable. 

In 2011, Kessar, Kunugi and Mitsuhashi introduced the concept of Bruaer indecomposability in  \cite{KESSAR201190}. They gave this definition for the purpose of studying gluing processes, which is a method for studying categorical equivalence between $p$-blocks of finite groups (see \cite{KOSHITANI2005726, MR2078933}). Brou\'e 's gluing method \cite[Theorem 6.3]{MR1308978} gives us a sufficient condition for $S(G\times H, \Delta P)$ to induce a stable equivalence of Morita type between two principal blocks. If this is the case, we always have that $S(G\times H, \Delta P)$ is Brauer indecomposable, and no exceptions are known so far. For reference, one could see Koshitani and  Lassueur's works \cite{MR4050079, MR4102107, MR4335855}. So it is important to know whether the Scott module is Brauer indecomposable or not.

Regarding the Brauer indecomposability of Scott modules, one may refer to \cite{KESSAR201190, ISHIOKA2017441, doi:10.1142/S021949882550015X}. Our present paper primarily builds upon Ishioka and Kunugi's work \cite{ISHIOKA2017441}, Koshitani and Tuvay's work \cite{doi:10.1142/S021949882550015X, koshitani2025liftingbrauerindecomposabilityscott} to establish a series of relationships between the kernel of a Scott module and its Brauer indecomposability.

For subgroups $Q, R\leq G$, we let
\begin{equation*}
\Hom_G(Q, R)= \{\phi:Q\rightarrow R\mid \text{$\phi$ is induced by a conjugation}\}.
\end{equation*}
For a $p$-subgroup $P\leq G$, the fusion system $\mathcal{F}_P(G)$ is the category whose objects are the subgroups of $P$ and whose morphism set from $Q$ to $R$ is $\Hom_G(Q, R)$. 

In \cite{MR4034788, MR4277756, MR4298846}, Koshitani and Tuvay study the Brauer indecomposability of Scott modules with an explict vertex $P$, where they make use of \cite[Theorem 1.3]{ISHIOKA2017441},  and we introduce this theorem here.

\begin{Theorem}
Let $G$ be a finite group and $P$ a $p$-subgroup of $G$. Suppose that $M= S(G, P)$ and that $\mathcal{F}_P(G)$ is saturated. TFAE:

(1) $M$ is Brauer indecomposable.

(2) $\Res_{Q\C_G(Q)}^{\N_G(Q)}S(\N_G(Q), \N_P(Q))$ is indecomposable for each fully normalized subgroup $Q\leq P$.
\end{Theorem}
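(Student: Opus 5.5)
The plan is to compare the Brauer quotient $M(Q)$ with the Scott module $S(\N_G(Q),\N_P(Q))$ for every subgroup $Q\leq P$. Then I reduce to fully normalized subgroups, using saturation and the fact that conjugation preserves everything in sight.

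\emph{Step 1 (reduction to subgroups of $P$ and to fully normalized ones).} Since $M=S(G,P)$ has vertex $P$, we get $M(Q)\neq 0$ only if $Q$ is $G$-conjugate to a subgroup of $P$. Moreover $M(Q^g)\cong M(Q)^g$ compatibly with $\N_G(Q)^g=\N_G(Q^g)$ and $(Q\C_G(Q))^g=Q^g\C_G(Q^g)$. Hence Brauer indecomposability only has to be checked for subgroups $Q\leq P$, and within each $G$-class for one representative. Saturation of $\mathcal{F}_P(G)$ gives a fully normalized representative in each class. Thus (1) is equivalent to: $\Res^{\N_G(Q)}_{Q\C_G(Q)}M(Q)$ is indecomposable or zero for every fully normalized $Q\leq P$.

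\emph{Step 2 (key comparison).} For fully normalized $Q\leq P$, I aim to show that $S(\N_G(Q),\N_P(Q))$ is an indecomposable direct summand of $M(Q)$. The argument runs as follows.
\begin{itemize}
\item $M$ is a direct summand of $\Ind_P^G k$, so $M(Q)$ is a direct summand of $(\Ind_P^G k)(Q)=k[(G/P)^Q]$.
\item This is a permutation $k\N_G(Q)$-module. By Mackey, it decomposes as $\bigoplus k[\N_G(Q)/\N_{P^g}(Q)]$, indexed by the $\N_G(Q)$-orbits of those $gP$ with $Q\leq P^g$.
\item The trivial coset $P$ contributes $\Ind_{\N_P(Q)}^{\N_G(Q)}k$. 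Since $Q$ is fully normalized, $\N_P(Q)$ is a Sylow $p$-subgroup of $\N_G(Q)$ among the relevant stabilizers, so the trivial module already occurs in this summand.
\item The trivial-module component of $k[(G/P)^Q]$ that lies in $M(Q)$ can then be traced using the Brauer morphism applied to the $G$-fixed point of $M$ mapping onto the trivial quotient.
\end{itemize}
This yields $S(\N_G(Q),\N_P(Q))\mid M(Q)$.

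\emph{Step 3 (main obstacle: equality when indecomposable).} The difficulty is proving the converse direction of Step 2, namely that $M(Q)$ contains no further summands once restrictions behave well.
\begin{itemize}
\item \emph{(2)$\Rightarrow$(1).} I would use the known result that $M(Q)$ is the Scott module $S(\N_G(Q),\N_P(Q))$ whenever $M(Q)$ is indecomposable. More robustly, I would compare $\Hom$-dimensions: the multiplicity of $k$ in the head of $M(Q)$ restricted to $Q\C_G(Q)$ is controlled by $M(Q)^{Q\C_G(Q)}$ modulo traces. I would try to show that every indecomposable summand of $\Res_{Q\C_G(Q)}M(Q)$ is a Scott-type summand with trivial quotient. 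For this I would apply a Burry--Carlson--Puig style argument for $Q\C_G(Q)$, whose Sylow subgroup contains $\C_P(Q)Q$ by saturation (receptiveness/full centralization). If the restriction of $S(\N_G(Q),\N_P(Q))$ is indecomposable, it has trivial quotient with multiplicity one, and no other summand of $M(Q)$ can exist. This forces $\Res M(Q)=\Res S(\N_G(Q),\N_P(Q))$, which is indecomposable.
\item \emph{(1)$\Rightarrow$(2).} If $\Res M(Q)$ is indecomposable, then its summand $\Res S(\N_G(Q),\N_P(Q))$ is all of it, and this is immediate from Step 2.
\end{itemize}

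I expect the hard step to be showing that every summand of $M(Q)$ restricted to $Q\C_G(Q)$ has a trivial quotient. My first attempt would use that $M$ is generated by its image of $\mathrm{Tr}_P^G$ on $P$-fixed points, and push this through the Brauer map at $Q$ using the relative trace formula $\Br_Q\circ\mathrm{Tr}_P^G=\sum \mathrm{Tr}^{\N_G(Q)}_{\N_{P^g}(Q)}\Br_Q$.
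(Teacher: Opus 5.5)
\textbf{Gap: (2)$\Rightarrow$(1) never excludes extra summands of $M(Q)$.}

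Your Step 1 and the direction (1)$\Rightarrow$(2) are fine. Step 2 is right in substance, although your ``tracing'' is only a sketch. The precise ingredient is this: for fully normalized $Q$, saturation gives $\N_{{}^tP}(Q)\leq_{\N_G(Q)}\N_P(Q)$ whenever $Q\leq{}^tP$. This uses that $\mathrm{Aut}_P(Q)$ is Sylow in $\mathrm{Aut}_G(Q)$ and that $Q$ is receptive, not full normalization alone. Hence $S(\N_G(Q),\N_P(Q))\mid\Res^G_{\N_G(Q)}M$, and since $Q$ acts trivially on it, it is a summand of $M(Q)$.

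The genuine gap is in (2)$\Rightarrow$(1). Nothing in your plan shows that $M(Q)$ has no indecomposable summand besides $S(\N_G(Q),\N_P(Q))$, and that is the whole content of this direction. Your multiplicity argument is a non sequitur. That the restricted Scott summand has $k$ in its head with multiplicity one puts no constraint on other summands, which may have trivial heads themselves. You would need $\dim\Hom_{Q\C_G(Q)}(M(Q),k)=1$ for all of $M(Q)$, which is essentially the conclusion. Appealing to ``$M(Q)$ is a Scott module whenever it is indecomposable'' is circular, since indecomposability of $M(Q)$ is what is unknown. Your argument also uses hypothesis (2) only at the single subgroup $Q$. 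Extra summands of $M(Q)$ are detected at subgroups strictly larger than $Q$, so the hypothesis must be used there as well.

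\textbf{Missing idea: downward induction on $|Q|$.} Induct on $[P:Q]$ and prove that $\Res^{\N_G(Q)}_{Q\C_G(Q)}M(Q)$ is indecomposable for all $Q\leq P$.

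For $Q=P$, saturation makes $\N_G(P)/P\C_G(P)$ a $p'$-group, and $M(P)$ is the projective cover of $k$ over $\N_G(P)/P$. So the restriction stays indecomposable.

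For $Q<P$ fully normalized, write $M(Q)=N_1\oplus\cdots\oplus N_r$ with $N_1\cong S(\N_G(Q),\N_P(Q))$, and suppose $r>1$.
\begin{enumerate}
\item Each $N_i$ with $i>1$ is a trivial source summand of some $\Ind^{\N_G(Q)}_{\N_{{}^tP}(Q)}k$. It has a vertex $T$ with $Q<T\leq\N_{{}^tP}(Q)$. Here $T\neq Q$ because $M$ is indecomposable with vertex $P\neq Q$, so $M(Q)$ has no projective $k[\N_G(Q)/Q]$-summand.
\item By the maximality of $\N_P(Q)$, $T\leq_{\N_G(Q)}\N_P(Q)$. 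Hence $N_1(T)\neq0$ and $N_i(T)\neq 0$.
\item We have $M(Q)(T)\cong M(T)$ as $k[\N_G(Q)\cap\N_G(T)]$-modules. Also $T\C_G(T)\leq\N_G(Q)\cap\N_G(T)$, because $Q\lhd T$. So the restriction of $M(T)$ to $T\C_G(T)$ is decomposable.
\item This contradicts the inductive hypothesis applied to ${}^{t^{-1}}T\leq P$, which has smaller index.
\end{enumerate}
Hence $M(Q)\cong S(\N_G(Q),\N_P(Q))$. Only at this point does hypothesis (2) at $Q$ finish the proof.
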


For a $p$-subgroup $P$ of a finite group $G$, let $\core_G(P):= \cap_{x\in G}{^xP}$ be the largest normal subgroup of $G$ contained in $P$ (when there is no ambiguity, we denote $\core_G(P)$ by $\core(P)$), where $^xP= xPx^{-1}$. For a $kG$-module $M$, we let 
\begin{equation*}
\ker(M)= \{g\in G\mid \text{$gm= m$ for all $m\in M$}\}.
\end{equation*}

From the conclusions in \cite{doi:10.1142/S021949882550015X, ISHIOKA2017441}, we observe the following fact: in certain cases, when we want to determine the Brauer indecomposability of the Scott module $S(G, P)$, the subgroups of $\core(P)$ are not important; what matters are the subgroups between $\core(P)$ and $P$. The following two theorems are intended to illustrate this fact.

There is a generalization of \cite[Theorem 1.3]{ISHIOKA2017441}.

\begin{Theorem}\label{Th2}
Let $G$ be a finite group and $P$ a $p$-subgroup of $G$. Let $M= S(G, P)$ and suppose that $\mathcal{F}_P(G)$ is saturated. TFAE:

(1) $M$ is Brauer indecomposable.

(2) $\Res_{Q\C_G(Q)}^{\N_G(Q)}S(\N_G(Q), \N_P(Q))$ is indecomposable for each fully normalized subgroup $Q$ satisfying $\core(P)\leq Q\leq P$.
\end{Theorem}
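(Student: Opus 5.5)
The plan is to reduce everything to subgroups containing $K:=\core(P)$, using that $K$ acts trivially on $M$, and then to feed the remaining subgroups into the same machinery as in \cite[Theorem 1.3]{ISHIOKA2017441}.

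The implication (1)$\Rightarrow$(2) is immediate from the earlier theorem (\cite[Theorem 1.3]{ISHIOKA2017441}), since condition (2) there, restricted to fully normalized $Q$ with $\core(P)\le Q\le P$, is exactly condition (2) here. So the content is (2)$\Rightarrow$(1).

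First I would observe that $K\le \ker(M)$. Since $K\trianglelefteq G$ and $K\le P$, for any $x\in K$ and $g\in G$ we have $xgP=g(g^{-1}xg)P=gP$. Hence $K$ acts trivially on $\Ind_P^G k$, and therefore on its summand $M$.

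Next, for any $p$-subgroup $Q\le G$ put $R=QK$. I would show that $M(Q)=M(R)$ as vector spaces, compatibly with the action of $\N_G(Q)\le \N_G(R)$.
\begin{itemize}
\item Since $K$ acts trivially, $M^Q=M^R$.
\item For $S<Q$, write $\Tr_S^Q=\Tr_{S(Q\cap K)}^Q\circ \Tr_S^{S(Q\cap K)}$. If $Q\cap K\not\le S$, the inner trace acts on $M^S=M^{SK}$ as multiplication by a nontrivial $p$-power index, so it is $0$.
\item If $Q\cap K\le S$, then $[Q:S]=[R:SK]$ and $\Tr_S^Q=\Tr_{SK}^R$ on $M^S=M^{SK}$.
\item Symmetrically, for $T<R$ with $TK=R$, the map $\Tr_T^R$ on $M^T=M^R$ is multiplication by $[R:T]$, which is $0$.
\end{itemize}
Together these show that the two sums of images of relative traces coincide, which gives the claimed equality $M(Q)=M(R)$.

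Now I pass from $R$ to $Q$ at the level of indecomposability. Suppose $\Res^{\N_G(R)}_{R\C_G(R)}M(R)$ is indecomposable (or zero). Because $K$ acts trivially, $R\C_G(R)=K\cdot Q\C_G(R)$ acts through $Q\C_G(R)$, so the module is already indecomposable over $Q\C_G(R)$. Since $Q\C_G(R)\le Q\C_G(Q)$, indecomposability persists over $Q\C_G(Q)$. Consequently $M$ is Brauer indecomposable as soon as $\Res_{R\C_G(R)}^{\N_G(R)}M(R)$ is indecomposable or zero for every $p$-subgroup $R\ge K$.

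Finally I handle a $p$-subgroup $R\ge K$. If $M(R)\ne 0$, then $R$ is $G$-conjugate into $P$. By saturation of $\mathcal F_P(G)$ it is conjugate to a fully normalized $R'\le P$, and $R'\ge K$ because $K$ is normal. Brauer quotients of conjugate subgroups are conjugate modules, so it suffices to treat $R'$. For fully normalized $R'$, the key input from Ishioka--Kunugi (the ingredient in their proof of \cite[Theorem 1.3]{ISHIOKA2017441}) identifies $M(R')\cong S(\N_G(R'),\N_P(R'))$ as $k\N_G(R')$-modules. Hypothesis (2) then finishes the proof.

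I expect the main obstacle to be the trace-map bookkeeping in the comparison $M(Q)=M(R)$, in particular making sure that the module structures match. The other delicate point is confirming that the identification $M(R')\cong S(\N_G(R'),\N_P(R'))$ can be quoted from the earlier work rather than reproved.
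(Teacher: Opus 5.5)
\paragraph{A gap in the last step.} Your first two steps are sound. The trace computation giving $M(Q)=M(QK)$, compatibly with $\N_G(Q)$, directly proves the reduction that the paper imports from Koshitani--Tuvay (Proposition 2.6). The passage from $R\C_G(R)$ to $Q\C_G(Q)$ is also correct. For $T<R$ you still need to handle $T$ with $K\not\le T$ and $TK<R$; use $\mathrm{Tr}_T^R=\mathrm{Tr}_{TK}^R\circ\mathrm{Tr}_T^{TK}$. You also need to match each $T\ge K$ with $S=T\cap Q$ via $T=(T\cap Q)K$. Both are routine.

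The problem is the last step. No unconditional result gives $M(R')\cong S(\N_G(R'),\N_P(R'))$ for fully normalized $R'$. Under saturation one only knows that $S(\N_G(R'),\N_P(R'))$ is a direct summand of $M(R')$. If the isomorphism held in general, the Ishioka--Kunugi criterion would follow immediately by conjugating to fully normalized subgroups.

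In their proof, and in the paper's, the isomorphism is derived, not quoted. Any further summand $N_i$ of $M(R')$ has a vertex $T$ with $R'<T\le {}^tP\cap\N_G(R')$ for some $t\in G$. Since $\N_{{}^tP}(R')\le_{\N_G(R')}\N_P(R')$, one gets $N_1(T)\neq 0\neq N_i(T)$. Because $R'\lhd T$, this makes $\Res^{\N_G(T)}_{T\C_G(T)}M(T)$ decomposable. That is a contradiction only if Brauer indecomposability at the strictly larger subgroup $T$ is already known. As written, your final step therefore presupposes the conclusion at every subgroup above $R'$.

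\paragraph{The repair.} Make the last step an induction on $[P:R']$ over subgroups with $K\le R'\le P$.
\begin{itemize}
\item The base case $R'=P$ holds because $\Res_{P\C_G(P)}^{\N_G(P)}M(P)$ is always indecomposable (Kessar--Kunugi--Mitsuhashi, Lemma 4.3).
\item The inductive hypothesis is that $\Res_{U\C_G(U)}^{\N_G(U)}M(U)$ is indecomposable for every $U$ with $K\le U\le P$ and $[P:U]<[P:R']$.
\end{itemize}
This works in your setting because the subgroup $T$ produced above strictly contains $R'\ge K$. Hence ${}^{t^{-1}}T$ lies between $K$ and $P$ and has smaller index. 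The induction never leaves the range of subgroups covered by hypothesis (2). Brauer indecomposability at $T$ then follows from that at ${}^{t^{-1}}T$, or at a fully normalized conjugate, which still contains $K$ since $K\lhd G$.

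With this induction inserted, your argument becomes essentially the paper's proof. The only difference is that you reduce to subgroups containing $\core(P)$ once at the start, whereas the paper does it inside each inductive step.
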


We introduce a reformulation of \cite[Corollary 1.2]{doi:10.1142/S021949882550015X}.

\begin{Theorem}\label{Th1}
Let $G$ be a finite group and $P$ a $p$-subgroup. Let $M= S(G, P)$. We assume $\mathcal{F}_P(G)$ is saturated. Let $R\lhd G$ be a normal subgroup such that $R\leq P\cap \ker(M)$. Let
\begin{equation*}
\mathcal{A}= \{T\leq P\mid \text{$R\leq T$ and $T$ is a maximal subgroup of $P$}\}.
\end{equation*}
If for any $T\in \mathcal{A}$ we have $\Res_{C_G(T)}^GM$ is indecomposable, then $M$ is Brauer indecomposable. In particular, $\core(P)\leq P\cap \ker(M)$, and any subgroup $R$ satisfying the above conditions is contained in $\core(P)$.
\end{Theorem}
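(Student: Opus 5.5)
Theorem~\ref{Th1} has two parts: the main implication, and the ``in particular'' inclusions. I would prove the inclusions first, since they are elementary. The main implication I would derive from Theorem~\ref{Th2} together with the known criterion behind \cite[Corollary 1.2]{doi:10.1142/S021949882550015X}.

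For the inclusions, recall that $M=S(G,P)$ is a direct summand of $\Ind_P^G k=k[G/P]$. The kernel of $k[G/P]$ is $\bigcap_{x}{}^xP=\core(P)$. So $\core(P)$ acts trivially on $k[G/P]$, hence on the summand $M$, which gives $\core(P)\le P\cap\ker(M)$.

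Conversely, let $R\lhd G$ with $R\le P$. Then $R\le {}^xP$ for every $x\in G$, so $R\le\core(P)$. This needs only the normality of $R$ and $R\le P$; the kernel condition is not used.

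For the main implication, I would apply Theorem~\ref{Th2}. Since $R\le\core(P)$, it suffices to show that $\Res^{\N_G(Q)}_{Q\C_G(Q)}S(\N_G(Q),\N_P(Q))$ is indecomposable for every fully normalized $Q$ with $\core(P)\le Q\le P$.

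The strategy is to pass to the quotient $\bar G=G/R$. Because $R\le\ker(M)$, $M$ is inflated from a $k\bar G$-module $\bar M$. Moreover $\bar M\cong S(\bar G,\bar P)$: indeed $k[G/P]=\mathrm{Inf}\,k[\bar G/\bar P]$, and the trivial-submodule characterization of the Scott module passes through inflation. Brauer constructions are also compatible. For $R\le Q$ we have $M(Q)\cong \bar M(\bar Q)$, with $\N_G(Q)$ mapping onto $\N_{\bar G}(\bar Q)$.

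The maximal subgroups of $P$ containing $R$ correspond exactly to the maximal subgroups of $\bar P$. In addition, the image of $\C_G(T)$ is contained in $\C_{\bar G}(\bar T)$, and its action on $\bar M$ factors through it. Therefore the hypothesis gives that $\Res_{\C_{\bar G}(\bar T)}\bar M$ is indecomposable for every maximal subgroup $\bar T$ of $\bar P$. Saturation of $\mathcal F_{\bar P}(\bar G)$ follows from the saturation of $\mathcal F_P(G)$, since $R$ is normal and contained in $P$.

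At this point I would invoke \cite[Corollary 1.2]{doi:10.1142/S021949882550015X} for $\bar G$, which gives that $\bar M$ is Brauer indecomposable. The criterion there is exactly that the restrictions to centralizers of all maximal subgroups of $P$ are indecomposable.

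It remains to lift Brauer indecomposability back to $M$. For $Q\ge R$, the isomorphism $M(Q)\cong\bar M(\bar Q)$ gives the required indecomposability. This holds because $Q\C_G(Q)$ maps into $\bar Q\C_{\bar G}(\bar Q)$, and indecomposability over the larger group $\bar Q\C_{\bar G}(\bar Q)$ must be descended to the image. For a general $Q$, I would instead use the reduction in Theorem~\ref{Th2}.

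The main obstacle is this centralizer mismatch. The image of $\C_G(Q)$ may be strictly smaller than $\C_{\bar G}(\bar Q)$, and this happens both in transferring the hypothesis and in the lifting step. To handle it, I would avoid the quotient for that part and argue directly in $G$, following the proof of \cite[Corollary 1.2]{doi:10.1142/S021949882550015X}. There one shows, for $Q$ between $R$ and $P$, that $\Res_{Q\C_G(Q)}S(\N_G(Q),\N_P(Q))$ is indecomposable. The argument takes a maximal $T\ge Q$-related subgroup and uses that restriction from $\C_G(T)$ to a subgroup with $p'$-index preserves indecomposability. Green correspondence between $S(\N_G(Q),\N_P(Q))$ and a summand of $M(Q)$ then controls the decomposition. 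Checking that only maximal subgroups containing $R$ are ever needed is where the care goes. It holds because every $Q$ from Theorem~\ref{Th2} contains $\core(P)\supseteq R$, and so every overgroup of $Q$ does too.
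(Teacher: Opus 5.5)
Your proof of the ``in particular'' inclusions is correct and is the paper's argument. The main implication, however, is not established.

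The quotient route stalls exactly where you say it does: indecomposability of $\bar M(\bar Q)$ over $\bar Q\C_{\bar G}(\bar Q)$ does not descend to the possibly smaller image of $Q\C_G(Q)$. Its input is also problematic. \cite[Corollary 1.2]{doi:10.1142/S021949882550015X} applied to $\bar G$ is essentially the statement being proved, since the paper presents Theorem~\ref{Th1} as a reformulation of it, so that step would be circular anyway.

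Your fallback paragraph does not repair this. It defers to ``the proof of'' the cited corollary and invokes Green correspondence without saying what it yields. It also rests on the principle that restriction to a subgroup of $p'$-index preserves indecomposability. That principle is false in general: take the $2$-dimensional simple $kS_3$-module for $p\ge 5$ and restrict it to the trivial subgroup. It also points the wrong way, since you never need to go \emph{down} from $\C_G(T)$.

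The missing idea is a direct-summand argument carried out in $G$ itself.
\begin{itemize}
\item Because $M$ is a $p$-permutation module, $M(Q)$ is a direct summand of $\Res^G_{\N_G(Q)}M$ \cite[Lemma 2.1]{ISHIOKA2017441}.
\item For $R\le Q<P$, choose a maximal subgroup $T$ of $P$ containing $Q$. Then $T\in\mathcal A$ and $\C_G(T)\le\C_G(Q)$.
\item So indecomposability of $\Res_{\C_G(T)}M$ gives indecomposability of $\Res_{\C_G(Q)}M$. Hence its summand $\Res_{\C_G(Q)}M(Q)$ is indecomposable if nonzero, and therefore so is its restriction to the larger group $Q\C_G(Q)$.
\end{itemize}
The paper reduces an arbitrary $Q$ to $QR$ by \cite[Proposition 2.6]{doi:10.1142/S021949882550015X} and treats $Q=P$ by \cite[Lemma 4.3]{KESSAR201190}.

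Your own reduction via Theorem~\ref{Th2} would also finish the same way. For fully normalized $Q$ with $\core(P)\le Q<P$, one has $S(\N_G(Q),\N_P(Q))\mid M(Q)\mid\Res^G_{\N_G(Q)}M$. Every maximal $T\ge Q$ lies in $\mathcal A$, as you noted, and the same restriction argument applies. The case $Q=P$ is again given by \cite[Lemma 4.3]{KESSAR201190}.
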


We will see that $\core(P)\leq \ker(S(G, P))$ is always true. Now we impose an additional condition on the kernel of Scott module: $P\leq \ker(S(G, P))$. 

Koshitani and Tuvay investigate the lifting property of Brauer indecomposability of Scott modules from a subgroup to the whole group in their preprint \cite{koshitani2025liftingbrauerindecomposabilityscott}. We use the method of \cite[Theorem 1.1]{koshitani2025liftingbrauerindecomposabilityscott}, but change the conditions, and finally obtain the following result:

\begin{Theorem}\label{Th3}
Let $G$ be a finite group and let $P$ be a $p$-subgroup. Assume $P\leq \ker(S(G, P))$ and $\N_G(P)$ is a $p$-group. If $S(\N_G(P), P)$ is Brauer indecomposable, then $S(G, P)$ is Brauer indecomposable. 
\end{Theorem}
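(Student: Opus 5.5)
The plan is to use the hypothesis $P\leq\ker(M)$, where $M=S(G,P)$, to identify every Brauer quotient $M(Q)$ with a restriction of $M$ itself. This reduces Brauer indecomposability to a single indecomposability statement over $P\C_G(P)$, which is exactly what Brauer indecomposability of $S(\N_G(P),P)$ gives at the subgroup $P$.

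\textbf{Step 1: trivial action forces $M(Q)=M$.} Fix a $p$-subgroup $Q$ of $G$.
\begin{itemize}
\item If $Q$ is not $G$-conjugate to a subgroup of $P$, then $M(Q)=0$, since $P$ is a vertex of $M$. There is nothing to check.
\item Since $M(^gQ)$ is just the $g$-conjugate of $M(Q)$, we may assume $Q\leq P$. Then $Q\leq\ker(M)$, so $M^Q=M$.
\item For $R<Q$ and $m\in M^R=M$ we have $\mathrm{Tr}_R^Q(m)=[Q:R]\,m=0$, because $p$ divides $[Q:R]$.
\item Hence $M(Q)=M$. Moreover, the $k\N_G(Q)$-module structure on $M(Q)$ is the restriction $\Res^G_{\N_G(Q)}M$.
\end{itemize}
Consequently, $M$ is Brauer indecomposable if and only if $\Res^G_{Q\C_G(Q)}M$ is indecomposable for every $Q\leq P$.

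\textbf{Step 2: reduce to $Q=P$.} For $Q\leq P$ we have $\C_G(P)\leq\C_G(Q)$, so $P\C_G(P)$, and hence $\C_G(P)$, lies in $Q\C_G(Q)$.

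Any decomposition of $\Res_{Q\C_G(Q)}M$ restricts to a decomposition of $\Res_{\C_G(P)}M$. So it suffices to show that $\Res^G_{\C_G(P)}M$ is indecomposable. Since $P$ acts trivially on $M$, this is equivalent to indecomposability of $\Res^G_{P\C_G(P)}M$.

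\textbf{Step 3: compare with the local Scott module.} Write $N=\N_G(P)$. We invoke the standard fact (Broué; see \cite{Linckelmann_2018}) that the Brauer quotient of a Scott module at its vertex is the Scott module of the normalizer: $M(P)\cong S(N,P)$ as $kN$-modules.

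By Step 1, $M(P)=\Res^G_N M$, so $\Res^G_N M\cong S(N,P)$. Since $N$ is a $p$-group, $S(N,P)=\Ind_P^N k=k[N/P]$, though we only need the isomorphism itself.

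By hypothesis $S(N,P)$ is Brauer indecomposable. Its Brauer quotient at $P$ is again $S(N,P)$: indeed $P\leq\ker(S(N,P))$, because $P\lhd N$ acts trivially on $k[N/P]$, so Step 1 applies inside $N$. Therefore $\Res^N_{P\C_N(P)}S(N,P)$ is indecomposable. As $\C_N(P)=\C_G(P)$, this says that $\Res^G_{P\C_G(P)}M$ is indecomposable, which finishes the argument by Step 2.

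\textbf{Expected obstacle.} The delicate point is Step 3, namely justifying $M(P)\cong S(N,P)$ as $kN$-modules, not merely up to some twist or extra summands. If citing it directly is unsatisfactory, I would argue as follows.
\begin{itemize}
\item Since $P$ acts trivially on $M$, we have $\Res^G_N M=M(P)$.
\item By the Green correspondence with vertex $P$ (note $N=\N_G(P)$), $\Res^G_N M$ has a unique summand with vertex $P$, and it contains the trivial module in its head.
\item The Burry–Carlson–Puig argument shows that $M(P)$ is indecomposable. That summand is therefore all of $\Res^G_N M$, and it is $S(N,P)$.
\end{itemize}
The hypothesis that $N$ is a $p$-group simplifies this identification, because $k[N/P]$ is then visibly indecomposable with trivial head.
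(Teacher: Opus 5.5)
Your argument is correct, but it takes a different route from the paper's.

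\textbf{The paper's route.} It first shows that $\Res^G_{\N_G(P)}S(G,P)$ is indecomposable using Mackey's formula. Every summand has $P$ acting trivially, hence has vertex $P$, hence is a summand of $\Ind_P^{\N_G(P)}k$. That module is indecomposable by Green's indecomposability theorem precisely because $\N_G(P)$ is a $p$-group. The paper then handles each $Q\le P$ separately, using Brauer indecomposability of $S(\N_G(P),P)$ at $Q$ together with $\C_{\N_G(P)}(Q)\le\C_G(Q)$.

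\textbf{Your route.} You obtain $\Res^G_{\N_G(P)}M=M(P)\cong S(\N_G(P),P)$ from the identification of the Brauer quotient at the vertex with the Green correspondent. This needs no hypothesis on $\N_G(P)$. You then use $\C_G(P)\le\C_G(Q)$ to collapse all $Q\le P$ to the single case $Q=P$.

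\textbf{What each buys.} Your route proves more. The assumption that $\N_G(P)$ is a $p$-group is never used, so the paper's subsequent $p$-nilpotent corollary, with its quotient-module argument, is not needed. Moreover, under $P\le\ker(S(G,P))$, Brauer indecomposability of $S(G,P)$ is equivalent to indecomposability of $\Res^{\N_G(P)}_{\C_G(P)}S(\N_G(P),P)$. The paper's route is slightly more self-contained: it uses Mackey plus Green's theorem instead of Brou\'e's theorem, but pays for this with the extra hypothesis.

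\textbf{On your fallback.} Attributing the indecomposability of $M(P)$ to the Burry--Carlson--Puig argument is off; that fact is Brou\'e's. Here no deep input is needed. Write $\Res^G_{\N_G(P)}M=f(M)\oplus Y$. Every summand of $Y$ has a vertex contained in some $\N_G(P)\cap{}^xP$ with $x\notin\N_G(P)$, and such a vertex cannot contain $P\lhd\N_G(P)$. Hence $Y(P)=0$. But $P$ acts trivially on $Y$, so $Y(P)=Y$, and therefore $Y=0$.
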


\section{Preliminaries}

Let $A$ be a ring with identity, and let $V, M$ be two $A$-modules. We use $V\mid M$ to denote $V$ is isomorphic to a direct summand of $M$.

Let $G$ be a finite group, let $P$ be a $p$-subgroup of $G$, let $Q\leq P$ and let $M$ be a $kG$-module. Let
\begin{equation*}
 M^P= \{m\in M\mid \text{$ym= m$ for all $y\in P$}\},
\end{equation*}
and let
\begin{equation*}
M_Q^P= \{\sum_{x\in [P/ Q]}xm\mid \text{$m\in M^Q$}\}.
\end{equation*}
We set $M(P)= M^P/ (\sum_{Q< P}M_Q^P)$ and denote by $\Br_P: M^P\rightarrow M(P)$ the canonical surjective map. We consider $M^P$ as an $k\N_G(P)$-module and $M(P)$ as a $k\N_G(P)$-module or as a $kN_G(P)/ P$-module. We will also use $\Br_P(M)$ to denote $M(P)$.

Let $G$ be a finite group and let $H_1, H_2$ be two subgroup. We use $H_1\leq_G H_2$ to denote there exists $g\in G$ such that $^gH_1\leq H_2$, and we use $H_1\not\leq_G H_2$ to denote the contrary. $H_1=_G H_2$ means $H_1\leq_G H_2$ and $H_2\leq_G H_1$ hold at the same time.
In the following lemma we will see that, when we want to prove $S(G, P)$ is Brauer indecomposable, we only need to consider subgroups of $P$. We will henceforth use this fact directly without further reference.
\begin{Lemma}
Let $G$ be a finite group and $P$ a $p$-subgroup. For any $p$-subgroup $Q\leq G$ such that $Q\not\leq_G P$, we have $\Br_Q(S(G, P))= 0$. In particular, if for any $Q\leq P$, $\Res_{Q\C_G(Q)}^{\N_G(Q)}\Br_Q(S(G, P))$ is indecomposable, then $S(G, P)$ is Brauer indecomposable.
\end{Lemma}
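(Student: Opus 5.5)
The plan is to use the standard fact that the Brauer construction of a $p$-permutation module is controlled by the fixed points of the permutation basis, and that $S(G,P)$ is a direct summand of $\Ind_P^G k = k[G/P]$.

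\textbf{Step 1: the permutation module.} Let $X = G/P$, so that $k[X] = \Ind_P^G k$, and let $Q$ be a $p$-subgroup of $G$. Then $\Br_Q(k[X]) \cong k[X^Q]$, where $X^Q$ is the set of $Q$-fixed cosets. This is the standard computation: $k[X]^Q$ is spanned by the $Q$-orbit sums. The orbit sum of any non-fixed point $x$, with stabilizer $Q_x < Q$, equals $\mathrm{Tr}_{Q_x}^Q(x)$, so it lies in the image of a relative trace from a proper subgroup. Only the fixed points survive in the quotient.

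\textbf{Step 2: fixed cosets.} A coset $gP$ is fixed by $Q$ if and only if $QgP = gP$, that is, $g^{-1}Qg \leq P$. In the paper's notation this means ${}^{g^{-1}}Q \leq P$, i.e.\ $Q \leq_G P$. So if $Q \not\leq_G P$, then $X^Q = \emptyset$ and $\Br_Q(k[G/P]) = 0$.

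\textbf{Step 3: pass to the summand.} The Brauer construction is additive, since $\Br_Q$ commutes with finite direct sums. Because $S(G,P) \mid k[G/P]$, the module $\Br_Q(S(G,P))$ is a direct summand of $\Br_Q(k[G/P]) = 0$. This proves the first assertion.

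\textbf{Step 4: the ``in particular''.} Let $Q$ be an arbitrary $p$-subgroup of $G$.
\begin{itemize}
\item If $Q \not\leq_G P$, then $\Br_Q(S(G,P)) = 0$ by Step 3, which is allowed by the definition of Brauer indecomposability.
\item Otherwise $Q = {}^{x}Q'$ for some $x \in G$ and some $Q' \leq P$. Conjugation by $x$ carries $\N_G(Q')$ onto $\N_G(Q)$ and $Q'\C_G(Q')$ onto $Q\C_G(Q)$. Since $S(G,P)$ is a $kG$-module, the map $m \mapsto xm$ induces an isomorphism between $\Br_{Q'}(S(G,P))$ and $\Br_Q(S(G,P))$, compatible with these conjugations.
\end{itemize}
In the second case, indecomposability of $\Res_{Q'\C_G(Q')}^{\N_G(Q')}\Br_{Q'}(S(G,P))$ therefore transfers to indecomposability of $\Res_{Q\C_G(Q)}^{\N_G(Q)}\Br_Q(S(G,P))$. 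Hence it suffices to check the condition for subgroups $Q \leq P$.

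\textbf{Main obstacle.} Nothing here is deep. The only point needing care is the orbit-sum argument in Step 1, together with the conjugation compatibility in Step 4. Both are routine, and I would cite \cite{Linckelmann_2018} for the identification $\Br_Q(k[X]) \cong k[X^Q]$.
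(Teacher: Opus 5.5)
Your proof is correct, and it is self-contained where the paper's is a bare citation. The paper applies Linckelmann's Proposition 5.10.3 to $S(G,P)$: for an indecomposable trivial source module $M$ with vertex $P$, $M(Q)\neq 0$ if and only if $Q\leq_G P$. It leaves the reduction of the ``in particular'' clause to subgroups of $P$ implicit.

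You instead prove the vanishing direction directly, using the following facts:
\begin{itemize}
\item $S(G,P)\mid k[G/P]$.
\item The $Q$-orbit sum of a non-fixed coset is a relative trace $\mathrm{Tr}_{Q_x}^Q(x)$ from a proper subgroup.
\item $(G/P)^Q=\emptyset$ exactly when $Q\not\leq_G P$.
\item $\Br_Q$ is additive on direct sums.
\end{itemize}
This is essentially the argument behind the cited result, specialised to a permutation module.

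What your route buys is independence from trivial source theory. It only uses that $S(G,P)$ is a summand of $\Ind_P^G(k)$, not that its vertex is exactly $P$. The cited proposition gives more, namely the converse non-vanishing for $Q\leq_G P$, which the paper does use later but which is not needed for this lemma.

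Your transport-of-structure step is a welcome addition, since it fills in what the paper takes for granted. The map $m\mapsto xm$ induces a $c_x$-semilinear isomorphism $\Br_{Q'}(S(G,P))\to\Br_Q(S(G,P))$, and $c_x(Q'\C_G(Q'))=Q\C_G(Q)$, so indecomposability transfers.
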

\begin{proof}
\cite[Proposition 5.10.3]{Linckelmann_2018}.
\end{proof}

Here is an easy observation on the kernel of $\Ind_P^G(k)$, where $k$ is the $1$-dimensional $kP$-module with trivial $P$-action.

\begin{Lemma}\label{Lem}
Let $G$ be a finite group and $H\leq G$. Let $M$ be a $kH$-module. Then we have $\ker(\Ind_H^G(M))= \core(\ker(M))$. In particular, $\ker(\Ind_H^G(k))= \core(H)$.
\end{Lemma}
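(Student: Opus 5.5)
The plan is to use the explicit model $\Ind_H^G(M)=kG\otimes_{kH}M$ and compute directly which elements of $G$ act trivially. Fix a set $[G/H]$ of left coset representatives, so that, as a vector space,
\begin{equation*}
\Ind_H^G(M)=\bigoplus_{x\in[G/H]}x\otimes M .
\end{equation*}
An element $g\in G$ sends $x\otimes m$ to $gx\otimes m=x'\otimes hm$, where $gx=x'h$ with $x'\in[G/H]$ and $h\in H$. We prove the two inclusions separately.

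\emph{The inclusion $\core(\ker(M))\subseteq\ker(\Ind_H^G(M))$.} Here $\core(\ker(M))=\bigcap_{x\in G}{}^x\ker(M)$ is a normal subgroup of $G$, since $\ker(M)\lhd H\leq G$. Take $g$ in it. For each $x\in[G/H]$ we have $x^{-1}gx\in\ker(M)\leq H$, so $gx=x(x^{-1}gx)$. Hence
\begin{equation*}
g(x\otimes m)=x\otimes (x^{-1}gx)m=x\otimes m .
\end{equation*}
Since the elements $x\otimes m$ span $\Ind_H^G(M)$, $g$ acts trivially.

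\emph{The inclusion $\ker(\Ind_H^G(M))\subseteq\core(\ker(M))$.} Suppose $g$ acts trivially on $\Ind_H^G(M)$. Fix $x\in[G/H]$ and $m\neq 0$. Then $g(x\otimes m)=x'\otimes hm$ with $x'\in[G/H]$. Because the sum is direct and $hm\neq 0$, the equality $x'\otimes hm=x\otimes m$ forces $x'=x$. Therefore $x^{-1}gx=h\in H$, and then $hm=m$.

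If $M=0$ the statement degenerates, and we use the convention that it concerns nonzero $M$. In the nonzero case, $m$ ranges over all of $M$, so $x^{-1}gx\in\ker(M)$ for every $x\in[G/H]$. For an arbitrary $y\in G$, write $y=xh'$ with $h'\in H$. Then
\begin{equation*}
y^{-1}gy=h'^{-1}(x^{-1}gx)h'\in\ker(M),
\end{equation*}
because $\ker(M)$ is normal in $H$. So $g\in{}^y\ker(M)$ for all $y\in G$, that is, $g\in\core(\ker(M))$.

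\emph{The particular case.} For $M=k$ the trivial module we have $\ker(k)=H$, which gives $\ker(\Ind_H^G(k))=\core(H)$.

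The only delicate point is the reverse inclusion: one must use the direct sum decomposition to see that a trivially acting $g$ fixes every coset $xH$. It is also worth noting why the core is taken over all of $G$ rather than only over the coset representatives: this is harmless precisely because $\ker(M)$ is normal in $H$.
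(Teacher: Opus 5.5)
Your proof is correct. The paper gives no argument of its own here and simply cites Nagao--Tsushima (p.~170, Lemma~1.2). Your direct computation in the decomposition $\Ind_H^G(M)=\bigoplus_{x\in[G/H]}x\otimes M$ is a self-contained substitute for that citation.

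Both inclusions are handled properly. For the forward inclusion, the element $h$ in $gx=x'h$ depends only on $g$ and $x$, so one nonzero $m$ suffices to force $x'=x$. Normality of $\ker(M)$ in $H$ then lets you pass from coset representatives to all of $G$.

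Your caveat about $M=0$ is more than a convention. For $M=0$ and $H<G$ one has $\ker(\Ind_H^G(0))=G$, while $\core(\ker(0))=\core(H)\leq H<G$. So the lemma as stated genuinely requires $M\neq 0$. This is harmless for the paper, which only applies it with $M=k$.
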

\begin{proof}
\cite[p.170, Lemma 1.2]{nagao2014representations}.
\end{proof}

In fact, we can describe the kernel of $S(G, P)$ more clearly.

\begin{Lemma}\label{Lem2.4}
Let $G$ be a finite group and $P$ a $p$-subgroup. Let $K= \ker(S(G, P))$. Then $\O_p(K)= \core_G(P)$ and $\O_{p'}(K)= \O_{p'}(G)$.
\end{Lemma}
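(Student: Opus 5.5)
We prove the two equalities separately. Both rest on the fact that $S(G,P)$ is a direct summand of $\Ind_P^G(k)$, together with Lemma \ref{Lem}.

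\emph{The $p$-part.} Since $S(G,P)\mid \Ind_P^G(k)$, every element acting trivially on $\Ind_P^G(k)$ acts trivially on $S(G,P)$. By Lemma \ref{Lem} this gives $\core_G(P)\leq K$. Moreover $\core_G(P)$ is a normal $p$-subgroup of $G$, and hence also of $K$, so $\core_G(P)\leq \O_p(K)$.

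For the reverse inclusion, we use that $\O_p(K)$ is characteristic in $K\lhd G$, so it is a normal $p$-subgroup of $G$ acting trivially on $M=S(G,P)$. By definition of vertex, $P$ is a vertex of $M$, and $M$ has trivial source. Set $R=\O_p(K)$. Since $R\lhd G$ acts trivially, $M$ is inflated from an indecomposable $k[G/R]$-module. Also $M^R=M$ and $M_R^R=M$, so $\Br_R(M)=M/\sum_{Q<R}M_Q^R$.

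The key point is that $\Br_R(M)\neq 0$. Take $Q<R$ and $m\in M^Q=M$. Because $R$ acts trivially, the relative trace is $\sum_{x\in[R/Q]}xm=[R:Q]m$. Here $[R:Q]$ is a nontrivial power of $p$, so this trace is $0$. Hence $\Br_R(M)=M\neq 0$. By the preceding lemma (\cite[Proposition 5.10.3]{Linckelmann_2018}), this forces $R\leq_G P$, say ${}^gR\leq P$. Since $R\lhd G$, we get $R\leq P$ and therefore $R\leq\core_G(P)$. This yields $\O_p(K)=\core_G(P)$.

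\emph{The $p'$-part.} By the same reasoning, $\O_{p'}(K)$ is a normal $p'$-subgroup of $G$, so $\O_{p'}(K)\leq \O_{p'}(G)$.

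Conversely, let $N=\O_{p'}(G)$. We need $N$ to act trivially on $S(G,P)$. Since $N\lhd G$, the space $M^N$ is a $kG$-submodule of $M$. Because $|N|$ is invertible in $k$, the idempotent $e=\frac{1}{|N|}\sum_{n\in N}n$ is central in $kG$, so $M=eM\oplus(1-e)M$. As $M$ is indecomposable, $N$ acts either trivially on $M$ or with $M^N=0$.

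We rule out the second case. The trivial $kP$-module $k$ is a direct summand of $\Res^G_P M$; equivalently, there is a nonzero $P$-fixed vector $v\in M$ whose image in the trivial quotient $k$ is nonzero. A cleaner argument uses Frobenius reciprocity, $\Hom_G(M,\Ind_P^G k)\neq 0$ via the inclusion of $M$ into $\Ind_P^G k$. The point is that $\Ind_P^G k\cong k[G/P]$ and $e\,k[G/P]\neq 0$: for instance, $e$ applied to the coset $P$ gives the average over $NP/P$, which is nonzero. We must ensure the component $S(G,P)$ specifically lies in $e\,k[G/P]$. For this, observe that $k[G/P]^N\cong k[G/NP]=\Ind_{NP}^G k$, and that the trivial module $k$ occurs in the top of $S(G,P)$. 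Indeed, $S(G,P)$ is characterized as the unique summand of $\Ind_P^G k$ containing the trivial submodule (equivalently, trivial quotient). Since $N$ acts trivially on $k$, the trivial constituent lies in $eM$, so $eM\neq0$, and therefore $eM=M$. Thus $N\leq K$, and $N$ is a normal $p'$-subgroup of $K$, so $N\leq \O_{p'}(K)$.

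The main obstacle is exactly this last step: pinning down that $S(G,P)$ has a trivial composition factor on which $N$ acts trivially. Citing the standard characterization of the Scott module (trivial submodule and trivial quotient) settles it.
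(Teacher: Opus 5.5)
Your proof is correct. Its outer structure matches the paper's: the inclusions $\core_G(P)\le \O_p(K)$ and $\O_{p'}(K)\le \O_{p'}(G)$ are obtained exactly as there. The two substantive inclusions, however, are proved differently.

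\textbf{The inclusion $\O_p(K)\le P$.} The paper cites Nagao--Tsushima (Theorem 7.8(1)) to conclude directly that a normal $p$-subgroup acting trivially on the indecomposable module $S(G,P)$ lies in $P$. You instead compute $\Br_R(M)=M\neq 0$ by hand: every relative trace from $Q<R$ is $[R:Q]m=0$. You then apply the paper's preliminary lemma (Linckelmann, Proposition 5.10.3). Your route uses that $S(G,P)$ is a trivial source module with vertex $P$, so it is less general than the cited vertex result. In exchange, it stays entirely within tools the paper already uses.

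\textbf{The inclusion $\O_{p'}(G)\le K$.} The paper invokes the structural fact $S(G,P)\cong S(G,PN)$, since $P$ is a Sylow $p$-subgroup of $PN$. It then applies Lemma \ref{Lem} to $PN$, whose core contains $N$. You instead use three things:
\begin{itemize}
\item the central idempotent $e=\frac{1}{|N|}\sum_{n\in N}n$, with $eM=M^N$;
\item indecomposability of $M$, which forces $eM=M$ or $eM=0$;
\item the trivial submodule of the Scott module, whose nonzero $G$-fixed vector lies in $M^N=eM$.
\end{itemize}
This avoids the comparison theorem for Scott modules. It also proves more: $\O_{p'}(G)$ acts trivially on every indecomposable $kG$-module with a trivial composition factor, and indeed on every module in the principal block.

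\textbf{Presentation.} In the $p'$-part, the sentences about $k\mid \Res^G_P M$, Frobenius reciprocity, and $k[G/P]^N\cong k[G/NP]$ are true but unused, and can be deleted. The argument is complete once you note that a nonzero $G$-fixed vector of $M$ lies in $eM$.
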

\begin{proof}
We already know $\core_G(P)\leq \O_p(K)$. Since $\O_p(K)$ acts trivially on $S(G, P)$, we get $\O_p(K)\leq P$ by \cite[p.294, Theorem 7.8 (1)]{nagao2014representations}. So $\O_p(K)\leq core_G(P)$.

Let $N= \O_{p'}(G)$. By \cite[p.297, Corollary 8.5]{nagao2014representations}, we have $S(G, P)\cong S(G, PN)$. Since $N$ acts trivially on $S(G, PN)$, we get $N\leq \O_{p'}(K)$. Conversely, since $\O_{p'}(K)\lhd G$, we get $\O_{p'}(K)= N$.
\end{proof}

For backgroud of fusion system, one could see \cite{Aschbacher_Kessar_Oliver_2011}. For the definition and basic properties of Scott module, one could see \cite{nagao2014representations}. For the convenience of the reader, we recall the definition and a basic property of Scott module. We use ${_Gk}$ to denote the 1-dimensional $kG$-module with trivial $G$-action.

Let $G\geq H$ and $Q\in Syl_p(H)$. (1) There exists an indecomposable component $S$ of $\Ind_H^G({_Hk})$ that satisfies the following three conditions: (a) ${_Gk}\mid \soc(S)$. (b) ${_Gk}\mid S/J(kG)S$. (c) $vx(S)=_G Q$. And if $f= f(G, Q, N= N_G(Q))$ denotes the Green correspondence, then $f(S)$ can be considered as an $k[N/Q]$-module in the canonical way, which is a projective cover of ${_{N/Q}k}$.

(2) For any indecomposable decomposition of $\Ind_H^G({_Hk})$, there exists a unique indecomposable component $S$ of $\Ind_H^G({_Hk})$ that satisfies any one of the three conditions above. This component is called the Scott module and is denoted by $S(G, H)$.

Let $Q\leq H\leq G$ and let $Q$ be a $p$-subgroup. Let $S= S(G, Q)$ and $S_1= S(H, Q)$. Then the following statements hold. (1) $S\mid \Ind_H^G(S_1)$. (2) $S_1\mid \Res_H^G(S)$.

\section{Proof of Theorems}

\begin{proof}[Proof of Theorem \ref{Th2}]
Since $(1)\Rightarrow (2)$ is already proved in \cite[Theorem 1.3]{ISHIOKA2017441}, we only prove $(2)\Rightarrow (1)$. Let $Q$ be a subgroup of $P$. We prove $\Res_{Q\C_G(Q)}^{\N_G(Q)}M(Q)$ is indecomposable by induction on $[P: Q]$. 

If $[P: Q]= 1$, we are done by \cite[Lemma 4.3]{KESSAR201190}. 

Now we assume $[P: Q]> 1$. Let $R= \core(P)$. By \cite[Proposition 2.6]{doi:10.1142/S021949882550015X}, it suffices to prove $\Res_{\C_G(QR)}^{\N_G(QR)}M(QR)$ is indecomposable. If $QR= P$, then we are done by \cite[Lemma 2.4]{ISHIOKA2017441}. Now we assume $QR< P$. Since $\mathcal{F}_P(G)$ is saturated, there exists an element $g\in G$ such that $^g(QR)$ is fully normalized in $\mathcal{F}_P(G)$. Let $L= {^g(QR)}$. Note that $\Res_{\C_G(L)}^{\N_G(L)}M(L)$ is indecomposable implies $\Res_{\C_G(QR)}^{\N_G(QR)}M(QR)$ is indecomposable. So we may assume $Q$ is fully normalized and $P> Q\geq R$.

The rest of the proof is the same as that in \cite[Theorem 1.3]{ISHIOKA2017441}. For the convenience of the reader, we include the proof here.

By \cite[Lemma 3.1, Theorem 2.2]{ISHIOKA2017441}, we get $S(\N_G(Q), \N_P(Q))\mid \Res_{\N_G(Q)}^GM$. Since $Q$ acts trivially on $S(\N_G(Q), \N_P(Q))$, we have that $S(\N_G(Q), \N_P(Q))\mid M(Q)$ by \cite[Lemma 2.1]{ISHIOKA2017441}.

There is an indecomposable decomposition $M(Q)= \oplus_{i= 1}^r N_i$ with $N_1\cong S(\N_G(Q), \N_P(Q))$. Suppose $r> 1$. Then for $i> 1$ we have $N_i\mid \Ind_{^tP\cap \N_G(Q)}^{\N_G(Q)}(k)$ for some $t\in G$ by Mackey's decomposition formula, where we use $k$ to denote the $1$-dimensional $k[\N_G(Q)\cap {^tP}]$-module with trivial $\N_G(Q)\cap {^tP}$-action. By \cite[Lemma 2.1, Lemma 2.6]{ISHIOKA2017441}, there is a vertex $T$ of $N_i$ such that $Q< T\leq \N_G(Q)\cap {^tP}$. Since $N_i$ is a trivial source module, we get $N_i(T)\neq 0$ by \cite[Proposition 5.10.3]{Linckelmann_2018}. By \cite[Lemma 3.2]{ISHIOKA2017441}, $\N_{^tP}(Q)\leq_{\N_G(Q)}\N_P(Q)$. Thus
\begin{equation*}
T\leq {^tP}\cap \N_G(Q)\leq_{\N_G(Q)}\N_P(Q).
\end{equation*}
So $N_1(T)\neq 0$. Now $N_1(T)\oplus N_i(T)\mid \Res_{\N_G(T)\cap \N_G(Q)}^{\N_G(T)}(M(T))$. Since $Q\lhd T$, we get $T\C_G(T)\leq \N_G(T)\cap \N_G(Q)$. So $\Res_{T\C_G(T)}^{\N_G(T)}(M(T))$ is decomposable. Since $[P: Q]> [P: {^{t^{-1}}T}]$, we get a contradiction by the induction hypothesis. Now we must have $r= 1$. So $M(Q)\cong S(\N_G(Q), \N_P(Q))$. By our assumption, $Res_{Q\C_G(Q)}^{\N_G(Q)}(M(Q))$ is indecomposable.
\end{proof}

\begin{proof}[Proof of Theorem \ref{Th1}]
If $R= P$, by \cite[Theorem 1.1]{doi:10.1142/S021949882550015X}, it suffices to prove $\Res_{\C_G(P)}^{\N_G(P)}M(P)$ is indecomposable, which is true by \cite[Lemma 4.3]{KESSAR201190}. Now we assume $R< P$. Let $Q$ be a subgroup such that $Q\leq P$. By \cite[Proposition 2.6]{doi:10.1142/S021949882550015X}, we may assume $R\leq Q< P$. There exists a $T\in \mathcal{A}$ such that $Q\leq T$. Now it suffices to prove that $\Res_{\C_G(Q)}^{\N_G(Q)}M(Q)$ is indecomposable. By \cite[5.8.5]{Linckelmann_2018}, $M(R)(Q)\cong M(Q)$ as $k\N_{\N_G(R)}(Q)$-modules. By \cite[Lemma 2.1]{ISHIOKA2017441}, $M(R)(Q)\mid \Res_{\N_{\N_G(R)}(Q)}^{\N_G(R)}M(R)$. Since $\C_G(Q)\leq \N_{\N_G(R)}(Q)$, we get $\Res_{\C_G(Q)}^{\N_G(Q)}M(Q)\mid Res_{\C_G(Q)}^GM$. By our assumption, we have $\Res_{\C_G(Q)}^GM$ is indecomposable. Thus $\Res_{\C_G(Q)}^{\N_G(Q)}M(Q)$ is also indecomposable. 

For the last statement, only need to see $\ker(M)\geq \ker(\Ind_P^G(k))= \core(P)$ by Lemma \ref{Lem}.
\end{proof}

\begin{proof}[Proof of Theorem \ref{Th3}]
Let $H= \N_G(P)$. Claim: $\Res_H^GS(G, P)$ is indecomposable.  By Mackey's decomposition formula, $\Res_{H}^GS(G, P)\mid \oplus_{t\in [H\backslash G/ P]} \Ind_{H\cap {^tP}}^H(k)$. Let $\Res_H^GS(G, P)= \oplus_{i= 1}^mN_i$ be an indecomposable decomposition. Since $P$ acts on $S(G, P)$ trivially, for any $1\leq i\leq m$, we get that $P$ is a vertex of $N_i$ by \cite[Lemma 2.1]{ISHIOKA2017441}. There exists $t\in G$ such that $N_i\mid \Ind_{H\cap {^tP}}^Hk$, then we get $P= H\cap {^tP}$. Thus $t\in H$ and $N_i\mid \Ind_P^Hk$. So we have
\begin{equation*}
S(H, P)\mid \Res_H^GS(G, P)\mid \Ind_P^Hk.
\end{equation*}
Since $P\lhd H$ with $p$-power index, $\Ind_P^Hk$ is an indecomposable $kH$-module by Green's indecomposability theorem. We complete the proof of our claim.

The rest follows from the proof of \cite[Theorem 1.1]{koshitani2025liftingbrauerindecomposabilityscott}. Let $Q$ be a subgroup of $P$. Since $S(H, P)$ is Brauer indecomposable by our assumption, we get $\Res_{\N_H(Q)}^HS(H, P)$ is indecomposable by \cite[Lemma 2.1]{ISHIOKA2017441}. $\Res_H^GS(G, P)$ is indecomposable implies that
\begin{equation*}
\Res_{\N_H(Q)}^GS(G, P)=\Res_{\N_H(Q)}^H\Res_H^GS(G, P)\cong \Res_{\N_H(Q)}^HS(H, P).
\end{equation*}
So $\Res_{\N_G(Q)}^GS(G, P)$ is indecomposable and thus $\Res_{\N_G(Q)}^GS(G, P)\cong \Br_Q(S(G, P))$. Note that 
\begin{equation*}
\Res_{\C_H(Q)}^{\N_G(Q)}\Br_Q(S(G, P))\cong \Res_{\C_H(Q)}^HS(H, P)
\end{equation*} 
is also indecomposable. Since $\C_H(Q)\leq \C_G(Q)$, we get that $\Res_{\C_G(Q)}^{\N_G(Q)}\Br_Q(S(G, P))$ is indecomposable, which implies $S(G, P)$ is Brauer indecomposable.  
\end{proof}

\begin{Corollary}\label{Coro}
Let $G$ be a finite group and let $P$ be a $p$-subgroup. Assume $P\leq \ker(S(G, P))$ and $G$ is $p$-nilpotent. If $S(\N_G(P), P)$ is Brauer indecomposable, then $S(G, P)$ is Brauer indecomposable.
\end{Corollary}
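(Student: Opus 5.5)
The plan is to reduce to Theorem \ref{Th3} by passing to the quotient by $N:=\O_{p'}(G)$. Since $G$ is $p$-nilpotent, $\bar G:=G/N$ is a $p$-group. In particular, for $\bar P:=PN/N\cong P$, the normalizer $\N_{\bar G}(\bar P)$ is a $p$-group, which is the extra hypothesis Theorem \ref{Th3} needs. So the work is to check that every ingredient (Scott module, kernel condition, Brauer constructions, normalizer hypothesis) is compatible with inflation along $G\to\bar G$.

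\emph{Step 1: $S(G,P)$ is inflated from $S(\bar G,\bar P)$.} By Lemma \ref{Lem2.4} (more precisely, the fact from its proof that $S(G,P)\cong S(G,PN)$), and since $\Ind_{PN}^G(k)\cong \mathrm{Inf}_{\bar G}^G\Ind_{\bar P}^{\bar G}(k)$, the indecomposable summands of $\Ind_{PN}^G(k)$ are inflations of those of $\Ind_{\bar P}^{\bar G}(k)$. The Scott module is the unique summand with ${_Gk}$ in its head, so $S(G,P)\cong \mathrm{Inf}\,S(\bar G,\bar P)$. The hypothesis $P\le \ker(S(G,P))$ then gives $\bar P\le\ker(S(\bar G,\bar P))$.

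\emph{Step 2: transfer the hypothesis on normalizers.} Since $P$ is a Sylow $p$-subgroup of $PN$, the Frattini argument gives $\N_G(PN)=\N_G(P)N$. Hence
\[
\N_{\bar G}(\bar P)=\N_G(P)N/N\cong \N_G(P)/C,\qquad C:=\N_G(P)\cap N .
\]
Here $C=\N_N(P)=\C_N(P)$ (as $[\N_N(P),P]\le N\cap P=1$), and $C$ is a normal $p'$-subgroup of $\N_G(P)$. Applying Step 1 to the group $\N_G(P)$ with normal $p'$-subgroup $C$ gives $S(\N_G(P),P)\cong\mathrm{Inf}\,S(\N_G(P)/C,\bar P)$.

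\emph{Step 3 (the main technical point): Brauer indecomposability passes through inflation by a normal $p'$-subgroup.} Let $X$ be a finite group, $Y\lhd X$ a $p'$-subgroup, and $V$ a $k[X/Y]$-module. For a $p$-subgroup $Q\le X$ with image $\bar Q\cong Q$, the fixed points and relative traces of $\mathrm{Inf}\,V$ under $Q$ coincide with those of $V$ under $\bar Q$. Hence $\Br_Q(\mathrm{Inf}\,V)=\Br_{\bar Q}(V)$, viewed as an $\N_X(Q)$-module via $\N_X(Q)\to\N_{X/Y}(\bar Q)$. The key input is the coprime-action fact that $\N_X(Q)Y/Y=\N_{X/Y}(\bar Q)$ and $\C_X(Q)Y/Y=\C_{X/Y}(\bar Q)$; both follow from the Frattini argument applied in $QY$, which has Sylow $p$-subgroup $Q$. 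Granting this, $Q\C_X(Q)$ maps onto $\bar Q\C_{X/Y}(\bar Q)$. So $\Res_{Q\C_X(Q)}\Br_Q(\mathrm{Inf}\,V)$ is indecomposable if and only if $\Res_{\bar Q\C_{X/Y}(\bar Q)}\Br_{\bar Q}(V)$ is, and every $p$-subgroup of $X/Y$ arises as such a $\bar Q$. Therefore $\mathrm{Inf}\,V$ is Brauer indecomposable if and only if $V$ is.

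\emph{Step 4: conclude.} Apply Step 3 with $X=\N_G(P)$ and $Y=C$: the assumed Brauer indecomposability of $S(\N_G(P),P)$ yields that of $S(\N_{\bar G}(\bar P),\bar P)$. Now Theorem \ref{Th3} applies to $(\bar G,\bar P)$: its kernel condition is Step 1 and its normalizer is a $p$-group. So $S(\bar G,\bar P)$ is Brauer indecomposable, and Step 3 with $X=G$, $Y=N$ gives that $S(G,P)$ is Brauer indecomposable. The step I expect to need the most care is Step 3, specifically the surjectivity of $\C_X(Q)\to\C_{X/Y}(\bar Q)$. I would prove it by taking $xY$ centralizing $\bar Q$, noting that $x$ then normalizes $QY$, so ${}^xQ$ and $Q$ are Sylow subgroups of $QY$ and can be conjugated together by an element of $Y$; the residual element acts on $Q$ trivially modulo $Y$, hence centralizes $Q$ since $[\,\cdot\,,Q]\le Q\cap Y=1$.
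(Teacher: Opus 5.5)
Your proposal is correct, and it takes a genuinely different route from the paper.

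The paper stays inside $G$. With $H=\N_G(P)$, it reuses the Mackey argument of Theorem \ref{Th3} to get $\Res^G_H S(G,P)\mid \Ind_P^H(k)$. It then notes that $L=H\cap \O_{p'}(G)$ acts trivially (Lemma \ref{Lem2.4}), so $\Res^G_H S(G,P)$ is a summand of the $L$-coinvariants $\Ind_P^H(k)/R\cong \Ind_{PL}^H(k)$. This module is indecomposable by Green's theorem since $H/L$ is a $p$-group, and the paper finishes by rerunning the second half of the proof of Theorem \ref{Th3}.

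You instead quotient by $\O_{p'}(G)$ and prove a general transfer principle: Scott modules, the kernel condition and Brauer indecomposability all behave well under inflation along a normal $p'$-subgroup. The key input is the coprime-action identity $\C_{X/Y}(QY/Y)=\C_X(Q)Y/Y$, and your Frattini-type argument for it is correct.

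As for what each buys: the paper's route needs only an explicit coinvariant computation, but it depends on the internals of the proof of Theorem \ref{Th3}. Yours costs more verification, but it is modular. It uses Theorem \ref{Th3} purely as a black box and gives a reusable reduction showing that the corollary is really a statement about $p$-groups.

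Your reduction in fact gives more than you use. Since $\bar G$ is a $p$-group, $S(\bar G,\bar P)=\Ind_{\bar P}^{\bar G}(k)$, whose kernel is $\core(\bar P)$ by Lemma \ref{Lem}. So your Step 1 forces $\bar P\lhd\bar G$, hence $\N_{\bar G}(\bar P)=\bar G$. The appeal to Theorem \ref{Th3} in Step 4 is therefore vacuous: the transferred hypothesis is already the conclusion. Pulled back to $G$, this says $G=\N_G(P)\O_{p'}(G)$, which is also the underlying reason $\Res^G_{\N_G(P)}S(G,P)$ is indecomposable in the paper's proof.
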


Remark. If $P= 1$ and $\mathcal{F}_P(G)$ is saturated, then $S(G, 1)$ is Brauer indecomposable by \cite[Theorem 1.2]{KESSAR201190}.

\begin{proof}[Proof of Corollay \ref{Coro}]
Let $H= \N_G(P)$. If $\Res_H^GS(G, P)$ is indecomposable, then the rest follows from the second paragragh of the proof of Theorem\ref{Th3}. So it suffices to prove $\Res_H^GS(G, P)$ is indecomposable.

By the proof of Theorem \ref{Th3}, we know $\Res_H^GS(G, P)\mid \Ind_P^Hk$ since $P\leq\ker(S(G, P))$. Let $X= \Res_H^GS(G, P)$. There are $kH$-homomorphisms $i, \pi$
\begin{equation*}
\begin{tikzcd}
X \arrow[r, "i"] & \Ind_P^H(k) \arrow[r, "\pi"] & X
\end{tikzcd}
\end{equation*}
such that $\pi i= id$. Let $L= H\cap \O_{p'}(G)$. By Lemma \ref{Lem2.4}, $L$ acts trivially on $X$. Let $R$ be an abelian subgroup of $\Ind_P^H(k)$ generated by $\{ax- x\mid a\in L, x\in \Ind_P^H(k)\}$. It is easy to see $R$ is a $kH$-submodule of $\Ind_P^H(k)$. For any $a\in L, x\in \Ind_P^H(k)$, we have $\pi(ax- x)= a\pi(x)- \pi(x)= \pi(x)- \pi(x)= 0$. So $X$ is a direct summand of $\Ind_P^H(k)/ R$. 

Claim: $\Ind_P^H(k)/ R\cong \Ind_{PL}^H(k)$. Let $[H/ LP]= \{h_1, h_2, \cdots, h_r\}$ and $[LP/ P]= \{c_1, c_2, \cdots, c_l\}$ be complete sets of left coset representatives. If $l= 1$, we get $PL= P$, then we are done. Now we assume $l> 1$. Let
\begin{equation*}
\Phi: \Ind_P^H(k)\rightarrow \Ind_{PL}^H(k), \sum_{i, j} h_ic_j\otimes\lambda_{ij}\mapsto \sum_{i, j} h_ic_j\otimes\lambda_{ij}
\end{equation*}
be a map. $\Phi$ is a $kH$-epimorphism. We only need to prove $\ker(\Phi)= R$. Since $L$ acts on $\Ind_{PL}^H(k)$ trivially, we get $R\leq \ker(\Phi)$. Now we assume $\Phi(\sum_{i, j} h_ic_j\otimes\lambda_{ij})= 0$, then $\sum_{i, j} h_i\otimes\lambda_{ij}= \sum_i h_i\otimes\sum_j(\lambda_{ij})= 0$. So for any $i_0\in \{1,\cdots, r\}$, $\sum_{j= 1}^l\lambda_{i_0j}= 0$. Thus $\sum_j h_{i_0}c_j\otimes\lambda_{i_0j}= \sum_{j= 2}^lh_{i_0}c_j\otimes\lambda_{i_0j}- h_{i_0}c_1\otimes(\sum_{j= 2}^l\lambda_{i_0j})= \sum_{j= 2}^lh_{i_0}(c_j- c_1)\otimes\lambda_{i_0j}$. Suppose $h_{i_0}c_j= \widetilde{c_j}h_{i_0}, h_{i_0}c_1= \widetilde{c_1}h_{i_0}$ for some $\widetilde{c_j}, \widetilde{c_1}\in L$, then 
\begin{equation*}
h_{i_0}(c_j- c_1)\otimes\lambda_{i_0j}= \widetilde{c_j}\widetilde{c_1}^{-1}\widetilde{c_1}h_{i_0}\otimes\lambda_{i_0j}- \widetilde{c_1}h_{i_0}\otimes\lambda_{i_0j}\in R.
\end{equation*}
So $\ker(\Phi)= R$ and we complete the proof of the claim.

$G$ is $p$-nilpotent implies $H/ L$ is a $p$-group. Since $X\mid \Ind_{PL}^H(k)$ and $\Ind_{PL}^H(k)$ is indecomposable by the Green's indecomposability theorem, we get that $X$ is indecomposable. 
\end{proof}

\section*{Acknowledgement}

The author would like to thank Zhenye Li for carefully reading and for helpful suggestions. The author thanks Shigeo Koshitani for carefully reading the manuscript and alerting the author that the citation of a withdrawn preprint needed clarification. 

\bibliography{reference}
\bibliographystyle{abbrv}

\end{document}